\documentclass[11pt]{amsart}
\usepackage{amsmath,amssymb,amsthm}
\usepackage[margin=1.15in]{geometry}
\usepackage[hidelinks]{hyperref}

\newcommand{\C}{\mathbb C}
\newcommand{\N}{\mathbb N}

\newcommand{\B}{\mathcal B}
\newcommand{\Ecal}{\mathcal E}
\newcommand{\Xcal}{\mathcal X}
\newcommand{\Xcpt}{\Xcal_{\mathrm c}}
\newcommand{\Xone}{\Xcal_{1+}}
\newcommand{\Tr}{\operatorname{tr}}

\newcommand{\supp}{\operatorname{supp}}
\newcommand{\dv}{\,dv}

\newcommand{\norm}[1]{\left\lVert #1\right\rVert}
\newcommand{\ip}[2]{\left\langle #1,#2\right\rangle}
\newtheorem{theorem}{Theorem}[section]
\newtheorem{proposition}[theorem]{Proposition}
\newtheorem{lemma}[theorem]{Lemma}
\theoremstyle{remark}
\newtheorem{remark}[theorem]{Remark}

\title[The Berger--Coburn endpoint problem]
{The critical Schatten exponent\\ for the Berger--Coburn endpoint problem}
\author{Jani A. Virtanen}
\address{University of Eastern Finland, Finland. University of Reading, UK. University of Helsinki, Finland}
\thanks{The author was supported in part by the Engineering and Physical Sciences Research Council grant EP/Y008375/1 and the Research Council of Finland grant 375631.}

\begin{document}
\begin{abstract}
Berger and Coburn showed that boundedness of a Toeplitz operator $T_g$ on the Fock space controls the heat transform $g^{(t)}$ for $1/4<t<1$, and conjectured the endpoint $g^{(1/4)}$ characterizes boundedness. Looi recently disproved this by constructing a bounded $T_g$ with unbounded $g^{(1/4)}$. We disprove the converse implication by showing that there exists a real-valued admissible symbol $g$ such that $g^{(1/4)}\in C_0(\C^n)$ while $T_g$ has no bounded extension. Despite this two-sided failure, we show that the forward implication has a sharp Schatten-class form, that is, $p=1$ is the exact Schatten exponent for which $T_g\in S_p$ forces $g^{(1/4)}$ to be bounded. The trace-class operators appearing in Berger and Coburn's trace formula have divergent trace norms as $t\downarrow1/4$, yet converge strongly to $2^n J$, and it follows that $g^{(1/4)}(a)=2^n\operatorname{tr}(T_gW_aJW_a^*)$ and $\|g^{(1/4)}\|_\infty\leq2^n\norm{T_g}_{S_1}$ for every admissible symbol $g$ with $T_g$ trace class, where $J$ is the parity operator and $W_a$ is Weyl translation. We prove that $2^n$ is optimal, and the same bound holds for $T_g\in S_p$ with $0<p\leq1$. For $p>1$, no corresponding $S_p$ estimate is possible, even for compactly supported smooth symbols. Moreover, a Baire category argument shows there is an admissible symbol $g$ with $T_g\in S_p$ for every $p>1$ while $g^{(1/4)}$ is unbounded, though it yields no explicit symbol.  We also determine the optimal constants in the Berger--Coburn estimates for $1/4<t\leq1$.
\end{abstract}
\maketitle

\section{Introduction and the main result}
For $n\geq1$, let $d\mu(z)=(2\pi)^{-n}e^{-|z|^2/2}\dv(z)$ on $\C^n$, where $v$ is Lebesgue measure, and write $F^2=F^2(\C^n,d\mu)$ for the Fock space. Denote by $P:L^2(d\mu)\to F^2$ the orthogonal projection and by $k_a(z)=e^{z\cdot\overline a/2-|a|^2/4}$ the normalized kernel at $a$. We say a measurable symbol $g$ is admissible if $gk_a\in L^2(d\mu)$ for all $a\in\C^n$.  Then $T_gf=P(gf)$ is defined on the dense set $\mathcal D_g=\{f\in F^2:gf\in L^2(d\mu)\}$, and we denote the extension simply by $T_g$. Set
\begin{equation}\label{e:heat}
    g^{(t)}(a)=(4\pi t)^{-n}\int_{\C^n}g(w)e^{-|w-a|^2/(4t)}\dv(w)
\end{equation}
whenever the integral converges absolutely.

Berger and Coburn proved that boundedness of $T_g$ impliesd boundedness of $g^{(t)}$ for $1/4<t<1$, and that boundedness of $g^{(t)}$ for any $0<t<1/4$ implies boundedness of $T_g$ \cite[Theorem~11 and \S5]{BC94}. They conjectured that $T_g$ is bounded if and only if $g^{(1/4)}$ is bounded \cite[\S6]{BC94}.  Looi \cite{L26} disproved the forward implication.  We show that the reverse implication fails even for real-valued $g$ with $g^{(1/4)}\in C_0(\C^n)$. Our main theorem also determines precisely for which exponents $p>0$ the condition $T_g\in S_p$ guarantees boundedness of $g^{(1/4)}$ for every admissible symbol $g$.

Denote by $S_p=S_p(F^2)$ the Schatten-$p$ class with its usual norm or quasi-norm, and use the convention $S_\infty=\B(F^2)$ with the operator norm.  Let $Jf(z)=f(-z)$ and $W_af(z)=k_a(z)f(z-a)$ be parity and Weyl translation, respectively, and write $C_c^\infty(\C^n)$ for the smooth compactly supported functions on $\C^n$.

\begin{theorem}\label{thm:main}
\textup{(i)} If $g$ is admissible and $T_g\in S_p$ for some $0<p\leq1$, then
\begin{equation}\label{e:parity}
    g^{(1/4)}(a)=2^n\Tr(T_gW_aJW_a^*),\qquad a\in\C^n,
\end{equation}
and $\norm{g^{(1/4)}}_\infty\leq2^n\norm{T_g}_{S_p}$. For $p=1$, the constant $2^n$ is optimal even for $g\in C_c^\infty(\C^n)$.

\textup{(ii)} There exists a single admissible symbol $g$ with $T_g\in S_p$ for every finite $p>1$ and $g^{(1/4)}$ unbounded.  For each $1<p\leq\infty$, no constant $C$ satisfies $\norm{g^{(1/4)}}_\infty\leq C\norm{T_g}_{S_p}$ for all $g\in C_c^\infty(\C^n)$.

\textup{(iii)} There exists a real-valued admissible symbol $g$ such that $g^{(1/4)}\in C_0(\C^n)$, but $T_g$ has no bounded extension to $F^2$.
\end{theorem}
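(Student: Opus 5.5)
Part (i) rests on Berger and Coburn's trace formula. Write $g^{(t)}(a)=\Tr(T_g A_t(a))$ with $A_t(a)=W_aA_tW_a^*$, where $A_t$ is a radial trace-class operator diagonal in the monomial basis $e_\alpha$. Its eigenvalues are explicit: from the Gaussian integrals one gets $\lambda_\alpha(t)=c(t)^n r(t)^{|\alpha|}$ with $r(t)\to-1$ as $t\downarrow1/4$ and $c(t)\to2$. Hence $\|A_t\|_{S_1}=\sum_\alpha|\lambda_\alpha(t)|$ blows up, while $A_t\to2^n\sum_\alpha(-1)^{|\alpha|}e_\alpha\otimes e_\alpha=2^nJ$ strongly, and $\sup_{t}\|A_t\|_{\mathrm{op}}$ stays bounded.

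For $T_g\in S_1$, strong convergence of uniformly bounded operators gives $\Tr(T_gB_t)\to\Tr(T_gB)$; this follows by approximating $T_g$ in $S_1$ by finite rank operators. The left side converges to $g^{(1/4)}(a)$ by dominated convergence, since admissibility should give absolute convergence of the integral at $t=1/4$. I must check this first: $|g|e^{-|w-a|^2}$ is integrable because $|g k_b|^2 d\mu$ is integrable with $|k_b|^2d\mu\propto e^{-|w-b|^2/2}$. That handles $t\ge1/4$ after a slight shift; I expect only domination near $t=1/4$ to be a little delicate. This yields \eqref{e:parity}, and $|{\Tr}(T_gW_aJW_a^*)|\le\|T_g\|_{S_1}$ because $W_aJW_a^*$ is unitary. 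For $p<1$ use $\|\cdot\|_{S_1}\le\|\cdot\|_{S_p}$.

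For optimality, take $g$ an approximate identity. If $g=\varepsilon^{-2n}\varphi(\cdot/\varepsilon)$ with $\varphi\ge0$, then $T_g\ge0$ and $\|T_g\|_{S_1}=\Tr T_g=\int g\,d\mu\cdot$const, which tends to $(2\pi)^{-n}\int\varphi$. Meanwhile $g^{(1/4)}(0)\to\pi^{-n}\int\varphi$. The ratio is $2^n$, which shows that $2^n$ is sharp. I will verify the normalizations carefully.

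For part (ii), the second claim is the key estimate. I seek $g_N\in C_c^\infty$ with $\|T_{g_N}\|_{S_p}$ bounded and $g_N^{(1/4)}(0)\to\infty$. Using \eqref{e:parity} at $a=0$, I want $T_g$ close to $\sum_{|\alpha|\le N}(-1)^{|\alpha|}c_\alpha e_\alpha\otimes e_\alpha$ with $c_\alpha$ chosen in $\ell^p\setminus\ell^1$-like fashion, e.g.\ $c_\alpha$ decaying like $k^{-1}$ in the degree $k=|\alpha|$ divided by the multiplicity. Radial symbols give diagonal Toeplitz operators with eigenvalues given by moments $\int g(r)r^{2k}e^{-r^2/2}$. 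A radial oscillating symbol such as $g(z)=e^{|z|^2/4}\cos(\cdot)$-type truncated, or a sum of Laguerre-weighted pieces, realizes prescribed alternating eigenvalue sequences. I would construct $g_N=\sum_{k\le N}b_k h_k$, where $h_k$ are compactly supported smooth radial approximants of the symbol whose Toeplitz operator is the projection onto degree $k$. The error terms must be controlled in $S_p$, and I expect this to be the main obstacle; I would try explicit Laguerre-polynomial symbols multiplied by cutoffs with large support. Then $\sum_k b_k(-1)^k\binom{k+n-1}{n-1}$ diverges while $\sum_k|b_k|^p\binom{k+n-1}{n-1}$ converges, for $p>1$.

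The single symbol comes from a Baire argument. In the Fréchet space $\bigcap_{p>1}$ of admissible symbols with the norms $\|T_g\|_{S_p}$ (suitably completed), the sets $\{g:\|g^{(1/4)}\|_\infty\le M\}$ are closed. They have empty interior by the unboundedness just proved, so their union is meagre. For $p=\infty$, use $S_\infty\supset S_2$.

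For part (iii), take the radial $g$ whose Toeplitz eigenvalues alternate and grow, e.g.\ eigenvalues $(-1)^k k^{\delta}$. Then $T_g$ is unbounded, but applying the $t\downarrow1/4$ kernel to an alternating sequence produces cancellation (an Abel-type sum $\sum(-1)^k k^\delta\lambda$-weights). I would realize such a $g$ explicitly as $e^{|z|^2/2}$ times an oscillating Gaussian-like function, compute $g^{(1/4)}$ in closed form to see Gaussian decay, and check admissibility.
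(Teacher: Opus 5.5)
Part (i) is fine. The limit argument is the paper's. Your approximate-identity proof of sharpness is a valid, more explicit alternative to the density argument. Note that $\Tr T_g=(2\pi)^{-n}\int g\dv$, not a $d\mu$-integral, so $\norm{T_{g_\varepsilon}}_{S_1}=(2\pi)^{-n}\int\varphi\dv$ exactly. In fact $T_{g_\varepsilon}\to(2\pi)^{-n}(\int\varphi\dv)\,e_0\otimes e_0$ in $S_1$, which is the same extremal operator $P_0$ the paper uses.

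The genuine gaps are in (ii) and (iii). In (ii), the step you flag as the main obstacle is exactly the missing idea. You need a mechanism that makes $T_h$, with $h\in C_c^\infty(\C^n)$, close in $S_1$ to a prescribed finite-rank operator such as $N^{-1/p}P_NJ$. The paper gets this from the density of $\{T_h:h\in C_c^\infty\}$ in $S_1$ (Lemma~\ref{lem:density}, via $\norm{T_h}_{S_1}\le(2\pi)^{-n}\norm{h}_{L^1(dv)}$ and Berger--Coburn's Theorem~9). Laguerre symbols with cutoffs come with no error control. Moreover, the radial symbol you want to approximate does not exist. If an admissible radial $g$ had $T_g$ equal to the projection onto degree-$k$ polynomials, then $\int g e^{-s|w|^2}\dv$ would be a nonzero polynomial in $s$, yet it tends to $0$ as $s\to\infty$.

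Your Baire step also uses the wrong topology. With only the norms $\norm{T_g}_{S_p}$, the functional $g\mapsto g^{(1/4)}(a)$ is unbounded; that is exactly what your test symbols show. So nothing makes $\{\norm{g^{(1/4)}}_\infty\le M\}$ closed. An abstract completion consists of operators, not symbols, so $g^{(1/4)}$ is not even defined on it. The paper adds the weighted norms $q_m$. These keep the space Fr\'echet inside the admissible class and give $\sup_{|a|\le m}|g^{(1/4)}(a)|\le 2^nq_m(g)$. Empty interior then needs test symbols that are also small in $q_m$. The paper obtains these by translating $h_N$ far out, which preserves $\norm{T_{h_N}}_{S_p}$ and $\norm{h_N^{(1/4)}}_\infty$ while $q_m(\tau_ah_N)\to0$.

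Part (iii) fails as proposed. Near $t=1/4$ one has $\lambda_t=1-1/(2t)\in(-1,0)$, so $S_t$ itself alternates and there is no cancellation. Expand $e^{-|w|^2/(4t)}=e^{-|w|^2/2}\sum_\alpha\lambda_t^{|\alpha|}|e_\alpha(w)|^2$. A radial admissible symbol with eigenvalues $(-1)^kk^\delta$ would then give $g^{(t)}(0)=(2t)^{-n}\sum_k\binom{k+n-1}{n-1}|\lambda_t|^kk^\delta$. This holds directly for $1/3\le t\le 1/2$, and then by analytic continuation in $1/(4t)$. The sum tends to $+\infty$ as $t\downarrow1/4$, whereas $g^{(t)}(0)\to g^{(1/4)}(0)<\infty$ for every admissible $g$. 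So no admissible radial symbol has this spectrum.

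Even with the sign corrected, a computation at the origin does not control $g^{(1/4)}(a)$ for $a\neq0$. For $n=1$, $g=|z|^2$ has eigenvalues $2(k+1)$ and $g^{(1/4)}(0)=1$, but $g^{(1/4)}(a)=1+|a|^2$.

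The paper's proof is nonconstructive. Suppose every $g$ in the Fr\'echet space $\mathcal Z=\{g\in\Ecal:g^{(1/4)}\in C_0(\C^n)\}$, with norms $q_m(g)+\norm{g^{(1/4)}}_\infty$, gave a bounded $T_g$. The closed graph theorem, translation (to remove $q_m$) and density then give $\norm{A}\le C\norm{\sigma_A}_\infty$ for all $A\in S_1$, where $\sigma_A(a)=2^n\Tr(AW_aJW_a^*)$. But $A_N=u_N\otimes e_0$, with $u_N=z_1^N/\sqrt{2^NN!}$, has $\norm{A_N}=1$ and $\norm{\sigma_{A_N}}_\infty\sim 2^n(2\pi N)^{-1/4}\to0$. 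A real-valued example is then $\operatorname{Re}g$ or $\operatorname{Im}g$.
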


\begin{remark}
(i) Notice that we do not assume that $g\in L^1(\C^n,dv)$; see \cite[p.~572]{BC94} for an admissible symbol outside $L^1(\C^n,dv)$ whose Toeplitz operator is trace class. We emphasize that the counterexamples in~(ii) and~(iii) of the preceding theorem are nonconstructive.

(ii) Theorem~\ref{thm:category} strengthens~(ii) by showing that symbols with bounded $g^{(1/4)}$ form a meager set in the natural Fr\'echet spaces considered below, while Proposition~\ref{prop:sharp} gives the optimal constants for $1/4<t\leq1$.
\end{remark}

\section{Parity, density, and finite-rank tests}

Recall
\begin{equation}\label{e:kernel}
    |k_a(w)|^2d\mu(w)=(2\pi)^{-n}e^{-|w-a|^2/2}\dv(w).
\end{equation}
Thus, $W_a$ is an isometry on $L^2(d\mu)$ and direct substitution gives $W_a^{-1}=W_{-a}=W_a^*$.  Since $W_a$ and its inverse preserve entire functions, $W_a$ maps $F^2$ onto itself and commutes with $P$.  For $\tau_ag(z)=g(z-a)$, a change of variables in \eqref{e:kernel} gives $\norm{(\tau_ag)k_b}_{L^2(d\mu)}=\norm{gk_{b-a}}_{L^2(d\mu)}$, so translations preserve admissibility.  Conjugating multiplication by $g$ with $W_a$, and changing variables in the heat integral~\eqref{e:heat} give
\begin{equation}\label{e:covariance}
    T_{\tau_ag}=W_aT_gW_a^*,\qquad
    (\tau_ag)^{(t)}(z)=g^{(t)}(z-a).
\end{equation}
The operator identity holds on the natural domains and therefore for bounded extensions; see also \cite[p.~576]{BC94}.

Let $e_\alpha(z)=z^\alpha/\sqrt{2^{|\alpha|}\alpha!}$, $\alpha\in\N_0^n$, be the normalized monomial basis. For $|\lambda|\leq1$, define $E_\lambda e_\alpha=\lambda^{|\alpha|}e_\alpha$, with $0^0=1$, and set
\begin{equation}
    \lambda_t=1-\frac1{2t},\qquad S_t=(2t)^{-n}E_{\lambda_t},\qquad t>\tfrac14.
\end{equation}
These are the operators introduced in \cite[p.~575]{BC94}. Berger and Coburn proved that, for admissible $g$ with $T_g$ bounded,
\begin{equation}\label{e:BC}
    g^{(t)}(a)=\Tr(T_gW_aS_tW_a^*),\qquad \tfrac14<t\leq\tfrac12,
\end{equation}
see \cite[Theorem~10]{BC94}.

The diagonal definition gives $\norm{E_\lambda}=1$ and $E_{-1}=J$. For $f=\sum_\alpha c_\alpha e_\alpha$ and $t\downarrow1/4$,
$$
    \norm{(E_{\lambda_t}-J)f}^2
    =\sum_\alpha\bigl|\lambda_t^{|\alpha|}-(-1)^{|\alpha|}\bigr|^2
    |c_\alpha|^2\to 0,
$$
by dominated convergence since each term is at most $4|c_\alpha|^2$. Thus,
\begin{equation}\label{e:strong-limit}
    S_t\to 2^nJ\quad\text{strongly},\qquad
    \norm{S_t}=(2t)^{-n}\leq2^n\quad(\tfrac14<t\leq\tfrac12).
\end{equation}

\begin{proof}[Proof of Theorem~\ref{thm:main}\textup{(i)}, except for sharpness]
Assume first that $T_g\in S_1$ and fix $a\in\C^n$. Set $B_t=W_a(S_t-2^nJ)W_a^*$.  By \eqref{e:strong-limit}, $B_t\to0$ strongly and $\norm{B_t}\leq2^{n+1}$.  For a finite-rank operator $F=\sum_{j=1}^r x_j\otimes y_j$, where $(x\otimes y)f=\ip{f}{y}x$, we have $\Tr(FB_t)=\sum_{j=1}^r\ip{B_tx_j}{y_j}\to0$. Given $\epsilon>0$, choose such an $F$ with $\norm{T_g-F}_{S_1}<\epsilon$. Then
$$
    |\Tr(T_gB_t)|\leq2^{n+1}\epsilon+|\Tr(FB_t)|,
$$
so $\Tr(T_gB_t)\to0$. This justifies passage to the limit in the trace in \eqref{e:BC}.

On the other hand, Cauchy--Schwarz and \eqref{e:kernel} imply
$$
    \int_{\C^n}|g(w)|e^{-|w-a|^2/2}\dv(w)
    \leq(2\pi)^n\norm{gk_a}_{L^2(d\mu)}<\infty.
$$
For $1/4<t\leq1/2$, the modulus of the integrand in \eqref{e:heat} is bounded by $\pi^{-n}|g(w)|e^{-|w-a|^2/2}$. Hence, dominated convergence shows that $g^{(t)}(a)\to g^{(1/4)}(a)$ and proves \eqref{e:parity}. Since $W_a$ and $J$ are unitary,
\begin{equation}\label{g-1/4-estimate}
    |g^{(1/4)}(a)|\leq2^n\norm{T_g}_{S_1}\norm{W_aJW_a^*}
    =2^n\norm{T_g}_{S_1}.
\end{equation}
For $0<p<1$, the singular-value inequality $\norm{A}_{S_1}\leq\norm{A}_{S_p}$ gives $S_p\subset S_1$ and the desired bound.
\end{proof}

To prove sharpness, we need the following density lemma.

\begin{lemma}\label{lem:density}
The subspace $\{T_h:h\in C_c^\infty(\C^n)\}$ is dense in $S_1$.
\end{lemma}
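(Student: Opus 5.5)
We prove density by duality. Since $S_1^*=\B(F^2)$ via $\ip{A}{X}=\Tr(AX)$, a linear subspace of $S_1$ is dense exactly when its annihilator in $\B(F^2)$ is zero. So it suffices to show the following: if $X\in\B(F^2)$ and $\Tr(T_hX)=0$ for every $h\in C_c^\infty(\C^n)$, then $X=0$.

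First we check that each $T_h$, $h\in C_c^\infty(\C^n)$, lies in $S_1$. Write $h=h_+-h_-$ plus the analogous splitting of $\operatorname{Im}h$, with each piece nonnegative, bounded and compactly supported. For nonnegative $h\in L^\infty$ with compact support, $T_h\geq0$. Its trace is
$$\sum_\alpha\ip{T_he_\alpha}{e_\alpha}=\int h(w)K(w,w)\,d\mu(w),$$
where $K(w,w)=e^{|w|^2/2}$. This gives $\Tr T_h=(2\pi)^{-n}\int h\,dv<\infty$, so $T_h\in S_1$.

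Next we compute the pairing against rank-one operators. For $f_1,f_2\in F^2$,
$$\Tr\bigl(T_h(f_1\otimes f_2)\bigr)=\ip{T_hf_1}{f_2}=\int h f_1\overline{f_2}\,d\mu .$$
Writing $X$ through its kernel is awkward, so we use the Berezin-type kernel $\ip{Xk_b}{k_a}$ instead. The identity $\Tr(T_hX)=\int h(w)\ip{Xk_w}{k_w}\,dv(w)\cdot(2\pi)^{-n}$ should hold for $h\in C_c^\infty$. It comes from the resolution of the identity $\int k_w\otimes k_w\,(2\pi)^{-n}dv(w)=I$ and the representation $T_h=(2\pi)^{-n}\int h(w)\,k_w\otimes k_w\,dv(w)$. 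The latter is a Bochner integral converging in $S_1$, since $\norm{k_w\otimes k_w}_{S_1}=1$ and $h$ is integrable. Continuity of $\Tr(\cdot\,X)$ on $S_1$ then lets us pass the trace through the integral.

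Now suppose $\Tr(T_hX)=0$ for all $h\in C_c^\infty(\C^n)$. The function $w\mapsto\ip{Xk_w}{k_w}$ is continuous, so it vanishes identically. The last step is to show this forces $X=0$. The function
$$F(z,\zeta)=e^{-(|z|^2+|\zeta|^2)/4}\cdot\ip{XK_{\bar\zeta}}{K_{z}}$$
is built from the unnormalized kernels $K_z=e^{\cdot\,\bar z/2}$. The quantity $G(z,\zeta)=\ip{XK_\zeta}{K_z}$ is holomorphic in $z$ and antiholomorphic in $\zeta$, so $(z,\bar\zeta)\mapsto G$ is holomorphic on $\C^{2n}$. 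It vanishes on the totally real diagonal $\zeta=z$, and the standard uniqueness argument (polarization) gives $G\equiv0$. Since the $K_z$ span a dense subspace of $F^2$, it follows that $X=0$.

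The main obstacle is justifying the trace identity rigorously, namely the $S_1$-convergence of the Bochner integral. This is routine once $\norm{k_w\otimes k_w}_{S_1}=1$ and measurability in $w$ are noted. The rest reduces to the well-known injectivity of the Berezin transform on $\B(F^2)$, which we would cite or prove via the holomorphic uniqueness step above.
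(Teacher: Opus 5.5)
Your proof is correct, but it takes a different route from the paper. The paper does not prove density from scratch. It first establishes the $S_1$-valued Bochner representation \eqref{e:antiwick}, with $\norm{T_h}_{S_1}\leq(2\pi)^{-n}\norm{h}_{L^1(dv)}$. It then cites Theorem~9 of Berger--Coburn for density of $\{T_h:h\in C_c(\C^n)\}$ in $S_1$. Finally it passes from continuous to smooth symbols by $L^1$-approximation, which the trace-norm bound turns into $S_1$-approximation.

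You prove density directly. By Hahn--Banach and $S_1^*=\B(F^2)$ it suffices that the annihilator is trivial. The Bochner representation gives $\Tr(T_hX)=(2\pi)^{-n}\int h(w)\ip{Xk_w}{k_w}\dv(w)$; the paper records this as \eqref{e:trace-pairing} but uses it only later. Continuity then forces the Berezin transform of $X$ to vanish. Injectivity of the Berezin transform gives $X=0$: the function $(z,\eta)\mapsto\ip{XK_{\bar\eta}}{K_z}$ is entire and vanishes on the maximal totally real set $\eta=\bar z$.

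Your route is self-contained and in effect reproves the cited density theorem, with no smoothing step. The same argument works for any class of bounded integrable symbols whose integrals against bounded continuous functions determine those functions. The paper's route is shorter on the page because it delegates exactly this duality-and-injectivity step to the literature, and its $L^1$ trace-norm estimate is reused elsewhere.

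Three small remarks on your write-up:
\begin{itemize}
\item The separate positivity computation showing $T_h\in S_1$ is redundant once the Bochner integral is in place.
\item The Bochner identity is best justified by the one-line matrix-coefficient check $\ip{T_hf}{u}=(2\pi)^{-n}\int h(w)\ip{f}{k_w}\ip{k_w}{u}\dv(w)$ via the reproducing property. The resolution of the identity is not needed for this.
\item The function $F(z,\zeta)$ you introduce is never used; the argument runs entirely through $G$.
\end{itemize}
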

\begin{proof}
For $h\in L^1(dv)\cap L^\infty$, consider
\begin{equation}\label{e:antiwick}
    T_h=(2\pi)^{-n}\int_{\C^n}h(a)(k_a\otimes k_a)\dv(a),\qquad
    \norm{T_h}_{S_1}\leq(2\pi)^{-n}\norm h_{L^1(dv)}.
\end{equation}
The map $a\mapsto k_a\otimes k_a$ is continuous in $S_1$ and has trace norm one, so the integral in \eqref{e:antiwick} is a well-defined $S_1$-valued Bochner integral. For $f,u\in F^2$, its matrix coefficient is
$$
    (2\pi)^{-n}\int_{\C^n}h(a)f(a)\overline{u(a)}e^{-|a|^2/2}\dv(a)
    =\ip{hf}{u}=\ip{T_hf}{u},
$$
which verifies the identity. Pairing with any $B\in\B(F^2)$ also gives
\begin{equation}\label{e:trace-pairing}
    \Tr(T_hB)=(2\pi)^{-n}\int_{\C^n}h(a)\ip{Bk_a}{k_a}\dv(a).
\end{equation}

Since $\{T_h:h\in C_c(\C^n)\}$ is dense in $S_1$ (see \cite[Theorem~9]{BC94}), it suffices to approximate each such $T_h$ by Toeplitz operators with smooth compactly supported symbols. Fix $h\in C_c(\C^n)$ and choose $h_j\in C_c^\infty(\C^n)$ with $\norm{h_j-h}_{L^1(dv)}\to0$. By \eqref{e:antiwick},
$$
    \norm{T_{h_j}-T_h}_{S_1}
    \leq(2\pi)^{-n}\norm{h_j-h}_{L^1(dv)}\to0,
$$
which completes the proof.
\end{proof}

\begin{proof}[Sharpness in Theorem~\ref{thm:main}\textup{(i)}]
Let $P_0=e_0\otimes e_0$, so $\norm{P_0}_{S_1}=\Tr(P_0J)=1$. By Lemma~\ref{lem:density}, choose $h_j\in C_c^\infty$ with $\norm{T_{h_j}-P_0}_{S_1}\to0$.  Then $\norm{T_{h_j}}_{S_1}\to1$ and
$$
    |\Tr(T_{h_j}J)-1|\leq\norm{T_{h_j}-P_0}_{S_1}\to 0.
$$
By \eqref{e:parity}, $|h_j^{(1/4)}(0)|\to2^n$, so $\liminf_j\norm{h_j^{(1/4)}}_\infty/\norm{T_{h_j}}_{S_1}\geq2^n$, which together with~\eqref{g-1/4-estimate}, implies that $2^n$ is the optimal constant.
\end{proof}

\begin{lemma}\label{lem:tests}
For $1<p\leq\infty$ and $N\in\N$, there is an $h_N\in C_c^\infty(\C^n)$ such that
\begin{equation}\label{e:tests}
    \norm{T_{h_N}}_{S_p}<\frac32,\qquad
    |h_N^{(1/4)}(0)|>2^n\left(N^{1-1/p}-\frac12\right).
\end{equation}
\end{lemma}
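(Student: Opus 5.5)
Take a finite-rank operator that pairs well with the parity operator, approximate it in $S_1$ by Toeplitz operators with smooth compactly supported symbols, and read off $h_N^{(1/4)}(0)$ from \eqref{e:parity}. Concretely, set
$$
    F_N=N^{-1/p}\sum_{k\in I_N}(-1)^{|\alpha_k|}\,e_{\alpha_k}\otimes e_{\alpha_k},
$$
where $\alpha_1,\dots,\alpha_N$ are $N$ distinct multi-indices. Since $F_N$ is diagonal with $N$ singular values equal to $N^{-1/p}$, we get $\norm{F_N}_{S_p}=1$ for $1<p<\infty$ (and $\norm{F_N}=N^{-1/p}\cdot$ const $\leq 1$ in the case $p=\infty$, with the convention $N^{-1/\infty}=1$). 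Also $J e_\alpha=(-1)^{|\alpha|}e_\alpha$, so
$$
    \Tr(F_NJ)=N^{-1/p}\sum_{k}(-1)^{|\alpha_k|}(-1)^{|\alpha_k|}=N^{1-1/p}.
$$
The sign pattern is chosen precisely to align with the parity operator.

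Next, by Lemma~\ref{lem:density}, choose $h_N\in C_c^\infty(\C^n)$ with $\norm{T_{h_N}-F_N}_{S_1}<\delta$, where $\delta=\min(1/2,2^{-n-1}\cdot\tfrac12)$, or simply $\delta<1/2$. Since $\norm{\cdot}_{S_p}\leq\norm{\cdot}_{S_1}$ for $p\geq1$ (including $p=\infty$, the operator norm), this gives
$$
    \norm{T_{h_N}}_{S_p}\leq\norm{F_N}_{S_p}+\norm{T_{h_N}-F_N}_{S_1}<1+\tfrac12=\tfrac32.
$$
Since $h_N\in C_c^\infty$ is admissible and $T_{h_N}$ is trace class, part (i) of Theorem~\ref{thm:main} (already proved) applies, and at $a=0$ it gives $h_N^{(1/4)}(0)=2^n\Tr(T_{h_N}J)$. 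Because $J$ is unitary,
$$
    |\Tr(T_{h_N}J)-\Tr(F_NJ)|\leq\norm{T_{h_N}-F_N}_{S_1}<\tfrac12,
$$
so
$$
    |h_N^{(1/4)}(0)|>2^n\bigl(N^{1-1/p}-\tfrac12\bigr),
$$
which is \eqref{e:tests}.

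The only delicate point is the normalization for $p=\infty$. There one takes $F_N=\sum_k(-1)^{|\alpha_k|}e_{\alpha_k}\otimes e_{\alpha_k}$ with operator norm $1$ and trace against $J$ equal to $N$, consistent with $N^{1-1/\infty}=N$. Beyond that, the argument rests entirely on the density lemma and the trace formula, so I expect no real obstacle. It is worth noting that the approximation must be taken in the $S_1$ norm, not merely in $S_p$: this is what controls the trace term, and it is why Lemma~\ref{lem:density} is used rather than an $S_p$-density statement.
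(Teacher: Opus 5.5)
Your proposal is correct and is essentially the paper's proof. Your sign-weighted diagonal operator $F_N$ is exactly the paper's $A_N=N^{-1/p}P_NJ$, since $Je_\alpha=(-1)^{|\alpha|}e_\alpha$. From there you follow the same steps: $S_1$-approximation via Lemma~\ref{lem:density}, the bound $\norm{\cdot}_{S_p}\leq\norm{\cdot}_{S_1}$, and \eqref{e:parity} at $a=0$.
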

\begin{proof}
Let $P_N$ be a projection onto the span of $N$ distinct monomials and put $A_N=N^{-1/p}P_NJ$, with $1/\infty=0$. Its nonzero singular values are $N^{-1/p}$, each with multiplicity $N$, so $\norm{A_N}_{S_p}=1$. Since $J^2=I$, we also have $\Tr(A_NJ)=N^{-1/p}\Tr(P_N)=N^{1-1/p}$. By Lemma~\ref{lem:density}, we can choose an $h_N$ such that $\norm{T_{h_N}-A_N}_{S_1}<1/2$. Then
$$
    \norm{T_{h_N}}_{S_p}
    \leq\norm{A_N}_{S_p}+\norm{T_{h_N}-A_N}_{S_1}<\frac32
$$
and
$$
    |\Tr(T_{h_N}J)-N^{1-1/p}|
    \leq\norm{T_{h_N}-A_N}_{S_1}\norm J<\frac12.
$$
Now \eqref{e:parity} implies \eqref{e:tests}.
\end{proof}

Since $N^{1-1/p}\to\infty$ for $p>1$, Lemma~\ref{lem:tests} shows that $\norm{g^{(1/4)}}_\infty$ cannot be bounded by a constant multiple of $\norm{T_g}_{S_p}$ on $C_c^\infty(\C^n)$. To obtain a single symbol with an unbounded transform, we combine them with translations.

\section{Symbol spaces, Baire category, and the closed graph theorem}

For $m\in\N_0$ set
\begin{equation}\label{e:E}
    q_m(g)^2=(2\pi)^{-n}\int_{\C^n}|g(z)|^2
    e^{-|z|^2/2+m|z|}\dv(z),\qquad
    \Ecal=\bigcap_{m\geq0}\{g:q_m(g)<\infty\}.
\end{equation}

\begin{lemma}\label{lem:symbols}
Equipped with the topology induced by the sequence of norms $q_m$, the space $\Ecal$ is Fr\'echet and coincides with the class of admissible symbols. If $|a|\leq m$, then $\norm{gk_a}_{L^2(d\mu)}\leq q_m(g)$.  Moreover, the endpoint heat transform $g^{(1/4)}$ is continuous and satisfies
\begin{equation}\label{e:loc}
    |g^{(1/4)}(a)|\leq2^n\norm{gk_a}_{L^2(d\mu)},\qquad
    \sup_{|a|\leq m}|g^{(1/4)}(a)|\leq2^nq_m(g).
\end{equation}
For $h\in C_c^\infty$ and each $m$, $q_m(\tau_ah)\to0$ as $|a|\to\infty$.
\end{lemma}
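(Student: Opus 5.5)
We need to prove the lemma's claims in turn: that $\Ecal$ is Fréchet, that it equals the admissible class, the bound $\norm{gk_a}\leq q_m(g)$, continuity of $g^{(1/4)}$ together with \eqref{e:loc}, and the decay of $q_m(\tau_ah)$.

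The common starting point is the identity
$$|k_a(z)|^2e^{-|z|^2/2}=e^{\operatorname{Re}(z\cdot\overline a)-|a|^2/2-|z|^2/2}=e^{-|z-a|^2/2}.$$
From it,
$$\norm{gk_a}_{L^2(d\mu)}^2=(2\pi)^{-n}\int|g(z)|^2e^{-|z|^2/2}e^{\operatorname{Re}(z\cdot\overline a)-|a|^2/2}\dv(z).$$
Since $\operatorname{Re}(z\cdot\overline a)\leq|a||z|\leq m|z|$ when $|a|\leq m$, and $e^{-|a|^2/2}\leq1$, we get $\norm{gk_a}\leq q_m(g)$.

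Next, admissible symbols coincide with $\Ecal$. If $g\in\Ecal$, the bound just proved shows $g$ is admissible. Conversely, let $g$ be admissible and fix $m$. Cover the sphere by finitely many unit vectors $u_1,\dots,u_K$ with $\operatorname{Re}(z\cdot\overline{u_j})\geq|z|/2$ on the cone $C_j$. Taking $a=2mu_j$ gives, on $C_j$,
$$e^{m|z|}\leq e^{\operatorname{Re}(z\cdot\overline a)}=e^{|a|^2/2}\,|k_a(z)|^2e^{|z|^2/2}.$$
Hence $q_m(g)^2\leq\sum_j e^{2m^2}\norm{gk_{2mu_j}}^2<\infty$.

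The $q_m$ are increasing weighted $L^2$ norms. A sequence that is Cauchy for every $q_m$ converges in $L^2$ of each weight. The limits agree a.e. because all weights are positive, so the common limit lies in $\Ecal$, and $\Ecal$ is Fréchet.

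For the heat transform, $t=1/4$ gives kernel $\pi^{-n}e^{-|w-a|^2}$. Write $e^{-|w-a|^2}=e^{-|w-a|^2/2}\cdot e^{-|w-a|^2/2}$ and apply Cauchy--Schwarz:
$$|g^{(1/4)}(a)|\leq\pi^{-n}\Bigl(\int|g|^2e^{-|w-a|^2/2}\Bigr)^{1/2}\Bigl(\int e^{-|w-a|^2/2}\Bigr)^{1/2}=\pi^{-n}\bigl((2\pi)^{n}\norm{gk_a}^2\bigr)^{1/2}(2\pi)^{n/2}=2^n\norm{gk_a}.$$
Combining this with the first bound gives \eqref{e:loc}.

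Continuity of $g^{(1/4)}$ follows from dominated convergence. For $|a|,|a'|\leq m$ we have $e^{-|w-a'|^2}\leq e^{-|w|^2/2+C_m|w|}$ for a suitable constant $C_m$, and $g$ is integrable against this weight by Cauchy--Schwarz with $q_{2C_m}$ (or any larger index). This domination uniformly in $a'$ is the only mildly delicate point.

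Finally, let $h\in C_c^\infty$ be supported in the ball $|z|\leq R$. Then
$$q_m(\tau_ah)^2\leq\norm h_\infty^2\,v(B_R)\sup_{|z-a|\leq R}e^{-|z|^2/2+m|z|}\to0\quad\text{as }|a|\to\infty,$$
because $-|z|^2/2+m|z|\to-\infty$ and $|z|\geq|a|-R$ on the support.
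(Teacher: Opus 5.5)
Your proof is correct and follows the paper's argument essentially step by step: the bound $\operatorname{Re}(z\cdot\overline a)\le m|z|$, the finite cone cover with $a_j=2m\zeta_j$, completeness via agreement of the weighted $L^2$ limits, Cauchy--Schwarz for \eqref{e:loc}, and a support estimate for $q_m(\tau_ah)$. The differences are minor. First, you obtain continuity of $g^{(1/4)}$ directly by dominated convergence with the majorant $|g(w)|e^{-|w|^2/2+2m|w|}$, which is valid for $|a'|\le m$ and integrable via $q_{4m}$; the paper instead truncates $g$ and uses \eqref{e:loc} to get uniform convergence on compacts. Second, there is a harmless slip: $e^{\operatorname{Re}(z\cdot\overline a)}=e^{|a|^2/2}|k_a(z)|^2$ holds without the extra factor $e^{|z|^2/2}$ you wrote, and your final bound on $q_m(g)^2$ is nonetheless right.
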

\begin{proof}
A sequence that is Cauchy in every $q_m$ has a limit in each of the corresponding weighted $L^2$ spaces.  Since $q_0\leq q_m$, all these limits agree with the $q_0$ limit. The common limit belongs to $\Ecal$, proving completeness. The topology is metrizable because the family is countable and Hausdorff because $q_0$ is a norm.

For $|a|\leq m$,
$$
    -\frac{|z-a|^2}{2}
    =-\frac{|z|^2}{2}+\operatorname{Re}(z\cdot\overline a)-\frac{|a|^2}{2}
    \leq-\frac{|z|^2}{2}+m|z|.
$$
Together with \eqref{e:kernel}, this proves the kernel-norm bound and shows that $\Ecal$ is contained in the admissible class. Conversely, compactness of the unit sphere gives finitely many unit vectors $\zeta_1,\ldots,\zeta_L$ such that $\max_j\operatorname{Re}(z\cdot\overline{\zeta_j})\geq |z|/2$ for all $z$. Indeed, the sets
$$
    \{u:|u|=1,\ \operatorname{Re}(u\cdot\overline\zeta)>1/2\},
    \qquad |\zeta|=1,
$$
form an open cover of the unit sphere, from which we can choose a finite subcover. For $m\geq1$ put $a_j=2m\zeta_j$. Then $e^{m|z|}\leq\sum_j e^{\operatorname{Re}(z\cdot\overline{a_j})}$, and hence
$$
    q_m(g)^2\leq\sum_{j=1}^L
    e^{|a_j|^2/2}\norm{gk_{a_j}}_{L^2(d\mu)}^2<\infty.
$$
The case $m=0$ follows from $k_0=1$. This proves equality of the symbol classes.  It also shows that all monomials belong to $\mathcal D_g$, since $|z^\alpha|^2\leq C_\alpha e^{|z|}$.

For the endpoint, $e^{-|w-a|^2}\leq e^{-|w-a|^2/2}$ and Cauchy--Schwarz give
\begin{align*}
    |g^{(1/4)}(a)|
    &\leq\pi^{-n}\int_{\C^n}|g(w)|e^{-|w-a|^2/2}\dv(w)\\
    &\leq\pi^{-n}
    \left(\int_{\C^n}|g(w)|^2e^{-|w-a|^2/2}\dv(w)\right)^{1/2}
    \left(\int_{\C^n}e^{-|w-a|^2/2}\dv(w)\right)^{1/2}\\
    &=2^n\norm{gk_a}_{L^2(d\mu)}.
\end{align*}
This proves absolute convergence and \eqref{e:loc}. For continuity, let $g_R=g\chi_{\{|z|\leq R\}}$.  Each $g_R$ is integrable and has compact support, so $g_R^{(1/4)}$ is continuous by dominated convergence. For each $m$, $q_m(g-g_R)\to0$, so \eqref{e:loc} gives $g_R^{(1/4)}\to g^{(1/4)}$ uniformly on $\{|a|\leq m\}$, and more generally, if $g_j\to g$ in $\Ecal$, then $g_j^{(1/4)}\to g^{(1/4)}$ locally uniformly on $\C^n$.

Finally, if $\supp h\subset\{|u|\leq R\}$ and $|a|>R$, then, by substituting $z=u+a$ in \eqref{e:E}, we get
$$
    q_m(\tau_ah)^2\leq(2\pi)^{-n}\norm h_{L^2(dv)}^2
    \exp\left(-\frac{(|a|-R)^2}{2}+m(|a|+R)\right)\to 0,
$$
which completes the proof.
\end{proof}

Define
$$
    \Xcal=\{g\in\Ecal:T_g\in\B(F^2)\},\qquad
    \Xcpt=\{g\in\Ecal:T_g\text{ is compact}\},
$$
$$
    \Xcal_p=\{g\in\Ecal:T_g\in S_p\}\quad(1<p<\infty),\qquad
    \Xone=\{g\in\Ecal:T_g\in\textstyle\bigcap_{p>1}S_p\}.
$$
Equip $\Xcal$ and $\Xcpt$ with $q_m(g)+\norm{T_g}$, equip $\Xcal_p$ with $q_m(g)+\norm{T_g}_{S_p}$, and equip $\Xone$ with $q_m(g)+\norm{T_g}_{S_{p_m}}$, where $p_m=1+1/(m+1)$. We write $P_m$ for these norms in each space. They are increasing because $q_m$ increases and $p_m$ decreases.  Also $\bigcap_m S_{p_m}=\bigcap_{p>1}S_p$. Indeed, for each $p>1$, choose $m$ with $p_m<p$ and use $S_{p_m}\subset S_p$. According to \eqref{e:antiwick}, $C_c^\infty$ is contained in all four types of symbol spaces.

\begin{lemma}\label{lem:complete}
Each of these symbol spaces is Fr\'echet.
\end{lemma}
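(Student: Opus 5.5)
The plan is to show that each space is a countably normed space whose norms $P_m$ increase, so metrizable and Hausdorff (since $q_0$ is a norm), and then to verify completeness. The whole argument rests on one closedness principle: if $g_j\to g$ in $\Ecal$, then $T_{g_j}\to T_g$ in the weak sense on a fixed dense set. Concretely, I would use the monomials $e_\alpha$, which belong to every $\mathcal D_g$ by Lemma~\ref{lem:symbols}. For monomials $e_\alpha,e_\beta$ we have
$$\ip{T_ge_\alpha}{e_\beta}=\ip{ge_\alpha}{e_\beta}_{L^2(d\mu)},$$
and $|e_\alpha\overline{e_\beta}|\le Ce^{|z|}$. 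Cauchy--Schwarz against the weight $e^{-|z|^2/2+2|z|}$ then gives $|\ip{(g_j-g)e_\alpha}{e_\beta}|\le C'q_2(g_j-g)\to0$. Hence the matrix entries of $T_{g_j}$ converge to those of $T_g$.

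Next, take a sequence $(g_j)$ that is Cauchy for every $P_m$. By Lemma~\ref{lem:symbols} it converges in $\Ecal$ to some admissible $g$. Moreover, $T_{g_j}$ is Cauchy in the relevant operator ideal: $\B(F^2)$, the compact operators, $S_p$, or each $S_{p_m}$. Each of these is complete (a Banach or quasi-Banach space; the $S_{p_m}$ with $p_m>1$ are Banach). So $T_{g_j}\to A$ in that ideal norm, and in particular in operator norm. Comparing matrix entries gives $\ip{Ae_\alpha}{e_\beta}=\ip{T_ge_\alpha}{e_\beta}$ for all $\alpha,\beta$.

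The main obstacle is the next step: concluding that $T_g$, initially defined only on $\mathcal D_g$, has a bounded extension equal to $A$. The difficulty is that the closure in the paper is taken from $\mathcal D_g$, not from the polynomials. I would first show $T_gf=Af$ for every polynomial $f$. Since $T_gf=P(gf)\in F^2$ and its coefficients against every $e_\beta$ agree with those of $Af$, the two vectors coincide. To pass to all of $\mathcal D_g$, take $f\in\mathcal D_g$ and any polynomial $u$. Then
$$\ip{T_gf}{u}=\ip{gf}{u}=\ip{f}{\overline g u}=\ip{f}{T_{\overline g}u},$$
where the middle step uses $\overline g u\in L^2$ (true since $\overline g$ is admissible, by the same argument). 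Also $\ip{T_{\overline g}u}{e_\alpha}=\overline{\ip{T_ge_\alpha}{u}}=\overline{\ip{Ae_\alpha}{u}}=\ip{A^*u}{e_\alpha}$, so $T_{\overline g}u=A^*u$ and therefore $\ip{T_gf}{u}=\ip{Af}{u}$. Polynomials are dense in $F^2$, so $T_gf=Af$ on $\mathcal D_g$. Hence $T_g$ extends boundedly to $A$, which lies in the required class (bounded, compact, $S_p$, or every $S_{p_m}$).

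Finally, $P_m(g_j-g)=q_m(g_j-g)+\norm{T_{g_j}-A}_{\ast}\to0$ for each $m$, where $\norm{\cdot}_\ast$ is the relevant ideal norm. For $\Xone$ this uses $S_{p_m}$ convergence for each $m$ separately, with the same limit $A$ by uniqueness of operator-norm limits. This proves completeness, and since the $P_m$ are increasing seminorms (norms) on a complete metrizable space, each space is Fréchet.
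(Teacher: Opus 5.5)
Your proposal is correct and follows the paper's proof. It has the same three steps: convergence of matrix entries under convergence in $\Ecal$, existence of the limit $A$ in the relevant operator ideal, and an adjoint argument identifying $A$ with the bounded extension of $T_g$ on all of $\mathcal D_g$. The only difference is that you test against monomials and polynomials, whereas the paper uses the normalized kernels $k_a$ together with $\norm{gk_a}_{L^2(d\mu)}\leq q_m(g)$ for $|a|\leq m$.

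One point you take for granted is that these classes are vector spaces, which you need when you form $T_{g_j}-T_{g_k}$ and $P_m(g_j-g)$. This is not automatic, because $\mathcal D_{g_1+g_2}$ may be strictly larger than $\mathcal D_{g_1}\cap\mathcal D_{g_2}$. The paper settles it with the same identification step, and your adjoint argument applied to $A_1+A_2$ and $g_1+g_2$ gives it as well, so you should state this explicitly.
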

\begin{proof}
First suppose that $g\in\Ecal$ and $A\in\B(F^2)$ satisfies $\ip{Ak_a}{k_b}=\ip{gk_a}{k_b}$ for all $a,b$. Then, for each $b$,
$$
    \ip{k_a}{A^*k_b}=\ip{gk_a}{k_b}
    =\ip{k_a}{P(\overline g\,k_b)},\qquad a\in\C^n.
$$
Since the span of the kernels $k_a$ is dense in $F^2$, $A^*k_b=P(\overline g\,k_b)$. Thus, for every $f\in\mathcal D_g$,
$$
    \ip{Af}{k_b}=\ip f{P(\overline g\,k_b)}
    =\ip{gf}{k_b}=\ip{P(gf)}{k_b}.
$$
Again, since the kernels span a dense subspace of $F^2$, it follows that $Af=P(gf)$ for every $f\in\mathcal D_g$. Hence, because Toeplitz operators are linear in their symbols, $\Xcal$, $\Xcpt$, $\Xcal_p$, and $\Xone$ are vector spaces.

Now let $(g_j)$ be Cauchy in every $P_m$. Lemma~\ref{lem:symbols} implies that $g_j\to g\in\Ecal$ in every $q_m$, and the operators $T_{g_j}$ converge in operator norm to some $A$. If $(g_j)\subset\Xcpt$, then each $T_{g_j}$ is compact, so $A$ is compact. In the Schatten cases the limits in the required Schatten norms also exist by completeness; they coincide with $A$ because Schatten-norm convergence implies operator-norm convergence. For every $a,b$, choose $m\geq|a|$. By Lemma~\ref{lem:symbols},
$$
    |\ip{(g_j-g)k_a}{k_b}|\leq q_m(g_j-g)\to 0,
$$
and so $\ip{Ak_a}{k_b}=\ip{gk_a}{k_b}$. The first paragraph identifies $A$ with the bounded extension of $T_g$ on $\mathcal D_g$. Thus $g$ lies in the required class and $P_m(g_j-g)\to0$ for every $m$, which proves completeness. Metrizability and the Hausdorff property follow countability and $q_0\leq P_m$.
\end{proof}

\begin{theorem}\label{thm:category}
For $Y=\Xcal$, $\Xcpt$, $\Xone$, or $\Xcal_p$ with fixed $1<p<\infty$, the set $\{g\in Y:g^{(1/4)}\in L^\infty(\C^n)\}$ is meager in $Y$.
\end{theorem}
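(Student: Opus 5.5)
Write the set as a countable union $\bigcup_{K\in\N}F_K$ with
$$
    F_K=\{g\in Y:\ |g^{(1/4)}(a)|\leq K\ \text{for all } a\in\C^n\}.
$$
Note that $g^{(1/4)}$ is continuous by Lemma~\ref{lem:symbols}, so membership in $L^\infty$ is the same as a finite supremum. The plan is to show that each $F_K$ is closed with empty interior in the Fr\'echet space $Y$ (Lemma~\ref{lem:complete}).

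\emph{Closedness.} Each point evaluation $g\mapsto g^{(1/4)}(a)$ is continuous on $Y$. Indeed, by \eqref{e:loc} we have $|g^{(1/4)}(a)|\leq 2^nq_m(g)\leq 2^nP_m(g)$ for any $m\geq|a|$. Hence $F_K$ is an intersection of closed sets.

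\emph{Empty interior.} Suppose $F_K$ contains a basic neighbourhood $\{g: P_m(g-g_0)<\delta\}$ of some $g_0\in F_K$. We will perturb $g_0$ by a translated test symbol, $g=g_0+\epsilon\,\tau_a h_N$, where $h_N$ is from Lemma~\ref{lem:tests}. For $Y=\Xcal,\Xcpt$ use $p=\infty$; for $Y=\Xcal_p$ use that $p$; for $Y=\Xone$ use $p=p_m$. The estimate is as follows.
\begin{itemize}
\item By \eqref{e:covariance}, $\norm{T_{\tau_ah_N}}_{S_p}=\norm{T_{h_N}}_{S_p}<3/2$, independently of $a$.
\item By Lemma~\ref{lem:symbols}, $q_m(\tau_ah_N)\to0$ as $|a|\to\infty$.
\item Hence $P_m(\epsilon\tau_ah_N)<\epsilon\cdot 3/2+o(1)$.
\end{itemize}
With $\epsilon=\delta/2$ and $|a|$ large, $g$ lies in the neighbourhood. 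Moreover $T_{h_N}$ is trace class, so $g$ stays in $Y$: it is compact, lies in every $S_p$, and $\Xone$ is a vector space by Lemma~\ref{lem:complete}.

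It remains to check that $g\notin F_K$. By \eqref{e:covariance},
$$
    g^{(1/4)}(a)=g_0^{(1/4)}(a)+\epsilon h_N^{(1/4)}(0),
$$
and therefore
$$
    |g^{(1/4)}(a)|\geq \epsilon\, 2^n\bigl(N^{1-1/p}-\tfrac12\bigr)-K.
$$
Since $p>1$, we may choose $N$ with $\epsilon2^n(N^{1-1/p}-1/2)>2K$ before choosing $a$. This gives $g\notin F_K$, a contradiction. Thus each $F_K$ is nowhere dense, and Baire's theorem gives meagerness.

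The main obstacle is making sure the perturbation is small in $P_m$ uniformly in $N$. The operator part is bounded by $3/2$ for all $N$, and the $q_m$ part vanishes only after translating far away; that is why $N$ must be fixed first and $a$ chosen afterwards. One also has to check, for $\Xone$, that a single seminorm $P_m$ involves only the fixed exponent $p_m>1$, so Lemma~\ref{lem:tests} applies with $p=p_m$.
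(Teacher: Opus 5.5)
Your proof is correct and follows essentially the paper's argument: the same closed sets, closedness via the local bound \eqref{e:loc}, and empty interior via the translated test symbols $\tau_a h_N$ of Lemma~\ref{lem:tests} with $r=\infty$, $p$, or $p_m$. The only difference is cosmetic: you perturb the centre $g_0$ directly and evaluate at the translation point, whereas the paper first turns the interior assumption into the linear estimate $\|h^{(1/4)}\|_\infty\leq(4M/\delta)P_m(h)$ on $Y$ and then tests that estimate.
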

\begin{proof}
For $M\in\N$, let $Y_M=\{g\in Y:\|g^{(1/4)}\|_\infty\leq M\}$. If $g_j\to g$ in $Y$ and $g_j\in Y_M$, then \eqref{e:loc} gives $g_j^{(1/4)}\to g^{(1/4)}$ uniformly on each compact subset of $\C^n$. Therefore $|g^{(1/4)}(a)|\leq M$ for every $a\in\C^n$, and hence $Y_M$ is closed.

Suppose that some $Y_M$ has nonempty interior. Since $P_m$ are increasing, there are $g_0\in Y_M$, $m\in\N_0$, and $\delta>0$ such that
$$
    g_0+V\subset Y_M,\qquad V=\{h\in Y:P_m(h)<\delta\}.
$$
For $h\in V$, linearity of the absolutely convergent heat integrals in \eqref{e:heat} gives $h^{(1/4)}=(g_0+h)^{(1/4)}-g_0^{(1/4)}$, so $\norm{h^{(1/4)}}_\infty\leq2M$. For arbitrary nonzero $h\in Y$, apply this to $\delta h/(2P_m(h))\in V$. It follows that
\begin{equation}\label{e:baire-bound}
    \norm{h^{(1/4)}}_\infty\leq\frac{4M}{\delta}P_m(h)\qquad(h\in Y).
\end{equation}

Let $r=\infty$ for $Y=\Xcal$ or $\Xcpt$, $r=p$ for $Y=\Xcal_p$, and $r=p_m$ for $Y=\Xone$. Apply Lemma~\ref{lem:tests} with this $r$ and, using Lemma~\ref{lem:symbols}, choose $a_N$ so that $q_m(\tau_{a_N}h_N)\leq1$. The translated symbols $\tau_{a_N}h_N$ belong to $C_c^\infty\subset Y$. By \eqref{e:covariance}, translation preserves both $\|T_{h_N}\|_{S_r}$ and $\|h_N^{(1/4)}\|_\infty$. Thus,
$$
    P_m(\tau_{a_N}h_N)<\tfrac52,\qquad
    \norm{(\tau_{a_N}h_N)^{(1/4)}}_\infty
    >2^n\left(N^{1-1/r}-\tfrac12\right).
$$
Substitution in \eqref{e:baire-bound} gives $2^n(N^{1-1/r}-\tfrac12)<10M/\delta$ for every $N$, which is a contradiction because $r>1$. Thus each closed set $Y_M$ has empty interior and is nowhere dense. Their union is exactly the set in the theorem, since the heat transforms are continuous, which proves meagerness.
\end{proof}

\begin{proof}[Proof of Theorem~\ref{thm:main} \textup{(ii)}]
The space $\Xone$ is nonempty and Fr\'echet by Lemma~\ref{lem:complete}. Baire's theorem and Theorem~\ref{thm:category} imply that $\Xone\setminus\bigcup_M Y_M$ is nonempty. Any symbol $g$ in this complement is admissible, satisfies $T_g\in S_p$ for every finite $p>1$, and has unbounded endpoint transform $g^{(1/4)}$. This gives one symbol simultaneously for all these exponents.  The failure of the uniform estimates was proved using Lemma~\ref{lem:tests}.
\end{proof}

\begin{proof}[Proof of Theorem~\ref{thm:main} \textup{(iii)}]
Write $Hg=g^{(1/4)}$ and equip
$$
    \mathcal Z=\{g\in\Ecal:Hg\in C_0(\C^n)\}
$$
with the increasing family of norms
$$
    p_m(g)=q_m(g)+\|Hg\|_\infty.
$$
It is easy to see that $\mathcal Z$ is a Fr\'echet space. Indeed, a sequence that is Cauchy in every $p_m$ converges in $\Ecal$ to some $g$, while its heat transforms converge uniformly to a function in $C_0(\C^n)$. By Lemma~\ref{lem:symbols}, the same heat transforms converge locally uniformly to $Hg$, so their uniform limit is $Hg$. Hence, $g\in\mathcal Z$ and $p_m(g_j-g)\to0$ for every $m$.

Suppose that $T_g$ has a bounded extension for every $g\in\mathcal Z$. Notice the linear map
$$
    \mathcal Z\to\B(F^2), \qquad g\to T_g,
$$
has a closed graph. To see this, suppose that $g_j\to g$ in $\mathcal Z$ and $\|T_{g_j}-A\|\to 0$. For every $a\in\C^n$, convergence in $\Ecal$ implies that
$$
 	T_{g_j}k_a=P(g_jk_a)\to P(gk_a)=T_gk_a.
$$
Thus, $Ak_a=T_gk_a$ for every $a$ and hence $A=T_g$ because the span of $\{k_a\}$ is dense. By the closed graph theorem, there are $C>0$ and $m$ such that
$$
	\|T_g\|\leq C(q_m(g)+\|Hg\|_\infty) \quad\text{for all}\ g\in\mathcal Z.
$$
Therefore, for $h\in C_c^\infty(\C^n)$, using the translation $\tau_ah(z)=h(z-a)$, we get
$$
	\|T_h\|=\|T_{\tau_a h}\|\le C(q_m(\tau_ah)+\|H(\tau_a h)\|\infty)
	=C(q_m(\tau_a h)+\|Hh\|_\infty).
$$
Since $q_m(\tau_a h)\to0$ as $a\to\infty$,
\begin{equation}\label{e:converse-test-estimate}
 	\|T_h\|\leq C\|Hh\|_\infty\quad 
	\text{for all}\ h\in C_c^\infty(\C^n).
\end{equation}

Given $A\in S_1$, define $\sigma_A(a)=2^n\Tr (AW_aJW_a^*)$ for $a\in\C^n$. By~\eqref{e:parity}, $Hh=\sigma_{T_h}$ for $h\in C_c^\infty(\C^n)$, and
$$
    \|\sigma_A-\sigma_B\|_\infty\leq 2^n\|A-B\|_{S_1}
    \quad \text{for}\ A,B\in S_1.
$$
Notice that trace-norm density implies that there are $h_j\in C_c^\infty(\C^n)$ such that $\|T_{h_j}-A\|_{S_1}\to0$. Thus, by \eqref{e:converse-test-estimate},
\begin{equation}\label{e:converse-trace-estimate}
    \|A\|\leq C\|\sigma_A\|_\infty \quad \text{for all}\ A\in S_1.
\end{equation}

For $N\geq1$, let
$$
    u_N(z)=\frac{z_1^N}{\sqrt{2^N N!}}, \qquad e_0=1,
    \qquad A_N=u_N\otimes e_0,
$$
where $(u\otimes v)f=\langle f,v\rangle u$. Then $\|A_N\|=1$. Since
$$
    (W_aJW_a^*f)(z)
    =e^{z\cdot\bar a-|a|^2}f(2a-z),
$$
we get
$$
	\sigma_{A_N}(a)=2^n\langle W_aJW_a^*u_N,e_0\rangle
	=2^n e^{-|a|^2}\frac{(\sqrt{2}\,a_1)^N}{\sqrt{N!}},
$$
where $a=(a_1,\ldots, a_n)\in\C^n$. By Stirling's formula,
$$
    \|\sigma_{A_N}\|_\infty=\sup_{a\in\C^n} 2^n e^{-|a_1|^2}\frac{(\sqrt{2}\,|a_1|)^N}{\sqrt{N!}}
    =2^n\frac{N^{N/2}e^{-N/2}}{\sqrt{N!}} \sim 2^n(2\pi N)^{-1/4}\to 0,
$$
which contradicts \eqref{e:converse-trace-estimate}. Thus, there is a $g\in\mathcal Z$ for which $T_g$ has no bounded extension. 

To show that we can find such a real-valued admissable symbol, write $g=u+iv$, where $u$ and $v$ are real-valued and admissible. Since the kernel of the heat transform is real, $Hu=\operatorname{Re}(Hg)$ and $Hv=\operatorname{Im}(Hg)$ both belong to $C_0(\C^n)$. Moreover, $\mathcal D_g=\mathcal D_u\cap\mathcal D_v$. If $T_u$ and $T_v$ both have bounded extensions, their linear combination extends $T_g$, which is a contradiction. Thus, at least one of $u$ and $v$ is the desired real-valued symbol.
\end{proof}

\begin{remark}
By Theorem~\ref{thm:main}(ii), there is an admissible symbol $g$ such that $T_g$ is compact and $g^{(1/4)}$ is unbounded. Since $g^{(1/4)}$ is continuous by Lemma~\ref{lem:symbols}, it does not vanish at infinity, so the forward implication of \cite[Conjecture~2]{BCI10} fails. The symbol class $\mathcal T(\C^n)$ of \cite{BCI10} coincides with our class of admissible symbols, and the symbol in Theorem~\ref{thm:main}(iii) satisfies $g^{(1/4)}\in C_0(\C^n)$ while $T_g$ has no bounded extension. In particular, $T_g$ is not compact, so the reverse implication fails, too.

Conjectures~1 and~2 of \cite{BCI10} hold for $g\in BMO^1(\C^n)$ by \cite[Theorems~6 and~5]{BCI10}, so any counterexample to either implication lies outside $BMO^1(\C^n)$. Moreover, if $g\ge0$ and $g^{(1/4)}\in C_0(\C^n)$, then $T_g$ is compact by \cite[Corollary~1]{BCI10}, so the symbol in Theorem~\ref{thm:main}(iii) necessarily changes sign.
\end{remark}

\section{Sharp constants away from the endpoint}
\begin{proposition}\label{prop:sharp}
Let $1/4<t\leq1$ and $\kappa_n(t)=\max\{1,(4t-1)^{-n}\}$. For admissible $g$ with $T_g$ bounded, $\norm{g^{(t)}}_\infty\leq\kappa_n(t)\norm{T_g}$.
Moreover,
\begin{equation}\label{e:sharp}
    \sup_{\substack{g\in C_c^\infty(\C^n)\\T_g\neq0}}
    \frac{\|g^{(t)}\|_\infty}{\norm{T_g}}=\kappa_n(t).
\end{equation}
\end{proposition}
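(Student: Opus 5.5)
The plan is to identify $\kappa_n(t)$ as the trace norm of $S_t$, which gives the upper bound from the Berger--Coburn trace formula. Sharpness then follows from trace/operator-norm duality, using the trace-norm density of Lemma~\ref{lem:density}.

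\textbf{Step 1: the trace norm of $S_t$.} Since $S_t=(2t)^{-n}E_{\lambda_t}$ is diagonal in the monomial basis with $|\lambda_t|<1$, we have
$$
\norm{S_t}_{S_1}=(2t)^{-n}\sum_{\alpha}|\lambda_t|^{|\alpha|}=(2t)^{-n}(1-|\lambda_t|)^{-n}.
$$
For $1/4<t\leq1/2$ we have $\lambda_t\in(-1,0]$ and $1-|\lambda_t|=(4t-1)/(2t)$, so $\norm{S_t}_{S_1}=(4t-1)^{-n}$. For $1/2\leq t\leq1$ we have $\lambda_t\in[0,1/2]$ and $1-\lambda_t=1/(2t)$, so $\norm{S_t}_{S_1}=1$. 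In both ranges $\norm{S_t}_{S_1}=\kappa_n(t)$.

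\textbf{Step 2: the upper bound.} I would use the trace formula
$$
g^{(t)}(a)=\Tr(T_gW_aS_tW_a^*)
$$
on the whole range $1/4<t\leq1$. On $(1/4,1/2]$ this is \eqref{e:BC}, and \cite[Theorem~10]{BC94} should also cover $1/2<t<1$. If the citation does not cover $t\geq1/2$, I would verify the formula directly, first for $h\in C_c^\infty$ via \eqref{e:trace-pairing}. This requires the Gaussian computation of $\ip{W_aS_tW_a^*k_b}{k_b}$, which should equal a multiple of $e^{-|a-b|^2/(4t-2)}$. Composing with the Berezin kernel $e^{-|\cdot|^2/2}$ then yields the heat kernel with parameter $t$. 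I would then pass to general admissible $g$ with $T_g$ bounded, either as in \cite{BC94} or by approximating in the weak operator topology against the fixed trace-class operator $W_aS_tW_a^*$; the endpoint $t=1$ needs only $\lambda_1=1/2$ and no limit. This extension is the step I expect to be the main technical obstacle. Once the formula holds, unitarity of $W_a$ gives
$$
|g^{(t)}(a)|\leq\norm{T_g}\norm{S_t}_{S_1}=\kappa_n(t)\norm{T_g}.
$$

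\textbf{Step 3: sharpness in \eqref{e:sharp}.} Let $U=J$ when $t<1/2$ and $U=I$ when $t\geq1/2$, so that $US_t\geq0$. Let $P_N$ be the projection onto the span of the monomials of degree at most $N$, and put $A_N=P_NU$. Then $A_N$ has finite rank, $\norm{A_N}=1$, and
$$
\Tr(A_NS_t)=(2t)^{-n}\sum_{|\alpha|\leq N}|\lambda_t|^{|\alpha|}\to\kappa_n(t).
$$
By Lemma~\ref{lem:density}, choose $h\in C_c^\infty$ with $\norm{T_h-A_N}_{S_1}<\epsilon$. This gives $\norm{T_h}<1+\epsilon$, and $|\Tr(T_hS_t)-\Tr(A_NS_t)|<\epsilon\norm{S_t}$. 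By Step 2 at $a=0$, $h^{(t)}(0)=\Tr(T_hS_t)$. Letting $N\to\infty$ and $\epsilon\to0$ makes the ratio $\norm{h^{(t)}}_\infty/\norm{T_h}$ approach $\kappa_n(t)$, and together with Step 2 this proves \eqref{e:sharp}.
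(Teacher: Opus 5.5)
Your Steps~1 and~3 match the paper: it computes $\norm{S_t}_{S_1}=\kappa_n(t)$ the same way, and it gets sharpness from $Q_jU$ (your $P_NU$), Lemma~\ref{lem:density}, and the identity $h^{(t)}(0)=\Tr(T_hS_t)$ for $h\in C_c^\infty(\C^n)$. The gap is in Step~2, at $t=1$.

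For $1/2<t<1$ you can fall back on the Berger--Coburn estimate itself, which is what the paper does. At $t=1$, however, \cite{BC94} gives nothing, so ``as in \cite{BC94}'' is not available. You then need the trace formula, or the bound, for an arbitrary admissible $g$ with $T_g$ bounded, and you leave exactly that open. The weak-operator-topology approximation you suggest does not supply it. Since $W_aS_tW_a^*$ is trace class but not finite rank, $A\mapsto\Tr(AW_aS_tW_a^*)$ is WOT-continuous only on norm-bounded sets. The natural approximants are not norm-bounded: for $g\chi_{\{|z|\leq R\}}$ the norms $\norm{T_{g\chi_{\{|z|\leq R\}}}}$ can blow up as $R\to\infty$. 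A bounded (even trace-class) $T_g$ can come from an unbounded oscillating symbol such as $e^{(1/4-\epsilon+i\beta)|z|^2}$ with $\beta$ large, and truncation destroys the cancellation.

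There are two ways to close the gap.
\begin{itemize}
\item The paper avoids the trace formula at $t=1$. It reduces to $a=0$ by \eqref{e:covariance}, uses $|g^{(t)}(0)|\leq\norm{T_g}$ for $1/2\leq t<1$, and lets $t\uparrow1$ by dominated convergence. The majorant $(2\pi)^{-n}|g(z)|e^{-|z|^2/4}$ is integrable by Cauchy--Schwarz and $q_1(g)<\infty$.
\item Your own route also works with no approximation. Write $W_aS_tW_a^*=\sum_\alpha\mu_\alpha f_\alpha\otimes f_\alpha$ with $f_\alpha=W_ae_\alpha\in\mathcal D_g$ and $\mu_\alpha=(2t)^{-n}\lambda_t^{|\alpha|}\geq0$ for $t\geq1/2$. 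Then $\Tr(T_gW_aS_tW_a^*)=\sum_\alpha\mu_\alpha\int g|f_\alpha|^2\,d\mu$. Since $\sum_\alpha\mu_\alpha|f_\alpha(z)|^2\,d\mu(z)=(4\pi t)^{-n}e^{-|z-a|^2/(4t)}\dv(z)$ is integrable against $|g|$ for every $t\leq1$ by admissibility, Fubini gives $g^{(t)}(a)$ directly, including at $t=1$.
\end{itemize}

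There is also a minor slip. In fact $\ip{W_aS_tW_a^*k_b}{k_b}=(2t)^{-n}e^{-|a-b|^2/(4t)}$, which by \eqref{e:trace-pairing} is already the heat kernel with parameter $t$. The Gaussian $e^{-|z-a|^2/(4t-2)}$ you name is instead the anti-Wick symbol of $W_aS_tW_a^*$, which exists only for $t>1/2$. Composing with the Berezin kernel is correct only under that reading.
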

\begin{proof}
For $t<1$, the upper estimate follows from \cite[p.~576 and the proof of Theorem~11]{BC94}. For $t=1$, it suffices to prove the estimate at $a=0$. Indeed, for $a\in\C^n$, the translated symbol $\tau_{-a}g$ is admissible and, by \eqref{e:covariance},
$$
    (\tau_{-a}g)^{(1)}(0)=g^{(1)}(a),\qquad
    \norm{T_{\tau_{-a}g}}=\norm{T_g}.
$$
We claim that $g^{(t)}(0)\to g^{(1)}(0)$ as $t\uparrow1$. Since $|g^{(t)}(0)|\leq\norm{T_g}$ for $1/2\leq t<1$, this immediately gives $|g^{(1)}(0)|\leq\norm{T_g}$, which proves the upper bound for $t=1$. To prove the claim, notice that for $1/2\leq t\leq1$, the modulus of the integrand in \eqref{e:heat} at $a=0$ is bounded by $(2\pi)^{-n}|g(z)|e^{-|z|^2/4}$. It remains to show that this function is integrable. Since $g$ is admissible, Lemma~\ref{lem:symbols} gives $q_1(g)<\infty$, and Cauchy--Schwarz yields
\begin{align*}
    \int_{\C^n}|g(z)|e^{-|z|^2/4}\dv(z)
    &=\int_{\C^n}
    \bigl(|g(z)|e^{-|z|^2/4+|z|/2}\bigr)e^{-|z|/2}\dv(z)\\
    &\leq(2\pi)^{n/2}q_1(g)
    \left(\int_{\C^n}e^{-|z|}\dv(z)\right)^{1/2}<\infty.
\end{align*}
Hence, dominated convergence gives $g^{(t)}(0)\to g^{(1)}(0)$ as $t\uparrow1$, and therefore $|g^{(1)}(0)|\leq\norm{T_g}$.

It remains to prove the reverse inequality in~\eqref{e:sharp}. We first express $g^{(t)}(0)$ as a trace pairing with $S_t$ for $g\in C_c^\infty(\C^n)$. For such $g$, the identity \eqref{e:BC} at $a=0$ in fact holds for every $1/4<t\leq1$. Indeed, the kernel expansion gives
$$
    \ip{E_\lambda k_z}{k_z}
    =e^{-|z|^2/2}\sum_{\alpha\in\N_0^n}\frac{\lambda^{|\alpha|}|z^\alpha|^2}
    {2^{|\alpha|}\alpha!}
    =e^{-(1-\lambda)|z|^2/2}.
$$
Since $1-\lambda_t=1/(2t)$, \eqref{e:trace-pairing} gives
\begin{equation}\label{e:compact-trace}
    \Tr(T_gS_t)=(2\pi)^{-n}(2t)^{-n}
    \int_{\C^n}g(z)e^{-|z|^2/(4t)}\dv(z)=g^{(t)}(0).
\end{equation}

We next compute the trace norm of $S_t$. Its singular values are $(2t)^{-n}|\lambda_t|^{|\alpha|}$, $\alpha\in\N_0^n$, and therefore
$$
    \norm{S_t}_{S_1} 
    =(2t)^{-n}\sum_{\alpha\in\N_0^n}|\lambda_t|^{|\alpha|}
    =(2t)^{-n}(1-|\lambda_t|)^{-n}=\kappa_n(t).
$$
Indeed,
$$
    1-|\lambda_t|=
    \begin{cases}
    (4t-1)/(2t),&1/4<t\leq1/2,\\
    1/(2t),&1/2\leq t\leq1.
    \end{cases}
$$

We now construct symbols for which the ratio in \eqref{e:sharp} approaches $\kappa_n(t)$. Let $Q_j$ be the orthogonal projection onto the span of the monomials of total degree at most $j$, and set $U=J$ if $\lambda_t<0$ and $U=I$ if $\lambda_t\geq0$. Then $US_t=|S_t|$ and $\norm{Q_jU}=1$. Since $Q_j\uparrow I$ strongly,
$$
    \Tr(Q_jUS_t)=\Tr(Q_j|S_t|)\to \Tr|S_t|=\norm{S_t}_{S_1}=\kappa_n(t).
$$

By Lemma~\ref{lem:density}, for each $j$ there is $g_j\in C_c^\infty(\C^n)$ such that
$$
    \norm{T_{g_j}-Q_jU}_{S_1}<\frac1j.
$$
Since convergence in $S_1$ implies convergence in operator norm, $\norm{T_{g_j}}\to\norm{Q_jU}=1$. Moreover,
$$
    \bigl|\Tr(T_{g_j}S_t)-\Tr(Q_jUS_t)\bigr|
    \leq\norm{T_{g_j}-Q_jU}_{S_1}\norm{S_t}\to 0.
$$
By \eqref{e:compact-trace}, $g_j^{(t)}(0)=\Tr(T_{g_j}S_t)$, and hence $g_j^{(t)}(0)\to\kappa_n(t)$. Therefore
$$
    \liminf_{j\to\infty}\frac{\norm{g_j^{(t)}}_\infty}{\norm{T_{g_j}}}
    \geq\lim_{j\to\infty}\frac{|g_j^{(t)}(0)|}{\norm{T_{g_j}}}=\kappa_n(t).
$$
Together with the upper estimate proved above, we obtain \eqref{e:sharp}.
\end{proof}

\begin{remark}
The preceding argument also gives an operator-theoretic interpretation of the critical exponent $p=1$. For $1/4<t\leq1$, the operator $S_t$ is the unique bounded operator $B$ such that
$$
    g^{(t)}(0)=\Tr(T_gB),\qquad g\in C_c^\infty(\C^n).
$$
Indeed, if $B$ has this property, then $\Tr(T_g(B-S_t))=0$ for every $g\in C_c^\infty(\C^n)$. By Lemma~\ref{lem:density} and continuity of the trace pairing, $\Tr(A(B-S_t))=0$ for every $A\in S_1$, and hence $B=S_t$.

At the endpoint $t=1/4$, \eqref{e:parity} shows that the unique bounded operator representing endpoint evaluation in this way is $2^nJ$. Thus, endpoint evaluation is represented by a bounded operator, which is exactly what is needed for duality with $S_1$. On the other hand, $J$ is unitary on the infinite-dimensional space $F^2$, and therefore $J\notin S_q$ for every finite $q$. Consequently, if $1<p<\infty$ and $q=p/(p-1)$ is the conjugate exponent, the operator representing endpoint evaluation does not belong to the dual Schatten class $S_q$. This reflects precisely the critical role of $p=1$ in Theorem~\ref{thm:main}.
\end{remark}

\end{document}